\begin{document}
\newtheorem{theorem}{Theorem}
\newtheorem{lemma}{Lemma}
\newtheorem{proposition}{Proposition}
\newtheorem{corollary}{Corollary}
\newtheorem{example}{Example}

\title{Invasion and non-invasion on a time-periodic domain}
\date{}
\author{Jane Allwright}
\maketitle

\section*{Abstract}
For a two-species reaction-diffusion-competition system on a domain that translates at constant speed and/or whose boundary varies periodically with time, we prove sufficient conditions such that one species can, and cannot, invade an established population of the other. These results extend those of Potapov and Lewis (2004) to the periodic case, and to more general reaction terms.

\section{Introduction}
In the paper \cite{PotLew} Potapov and Lewis study a two-species competition system of the form
\begin{equation}\label{eq_PotLew_1}
\frac{\partial u_1}{\partial t} = D_1\frac{\partial^2 u_1}{\partial x^2}+ f_1\left( u_1, u_2\right)
\end{equation}
\begin{equation}
\frac{\partial u_2}{\partial t} = D_2\frac{\partial^2 u_2}{\partial x^2}+ f_2\left( u_1, u_2\right)
\end{equation}
on an interval which is either stationary or translating with a constant speed $c$. Their reaction-competition terms have the Lotka-Volterra form
\begin{equation}\label{eq_f1f2_LotkaVolterra}
f_1(u_1,u_2)=u_1(r_1 - \alpha_{11}u_1 - \alpha_{12}u_2), \qquad 
f_2(u_1,u_2)=u_2(r_2 - \alpha_{21}u_1 - \alpha_{22}u_2).
\end{equation}
Among other results, they prove some sufficient conditions such that the second species $u_2$ can, or cannot, invade a stationary solution $U_1(x)$ of the first equation (which exists provided that $r_1$ is large enough). This amounts to finding conditions under which the stationary state $(U_1,0)$ is either unstable or stable with respect to introducing a small amount of $u_2$. However they give such conditions only for the case of a stationary domain ($c=0$). 

Here we shall extend these invasibility and non-invasibility results of \cite{PotLew} in several ways. Namely, we extend the work to domains in higher dimensions, to non-zero speeds $c$, to domains whose boundaries vary periodically with time, and to more general forms of the reaction and competition terms $f_1$, $f_2$. We consider
\begin{align}\label{eq_psi_1}
\frac{\partial \psi_1}{\partial t} &= D_1 \nabla^2 \psi_1+ f_1\left( \psi_1, \psi_2\right) \\
\frac{\partial \psi_2}{\partial t} &= D_2 \nabla^2 \psi_2+ f_2\left( \psi_1, \psi_2\right)
\end{align}
on domains of the form
\begin{equation}
\Omega(t)=\{ x\in\mathbb{R}^N: x-ct \in\Omega_*(t)\}, \qquad \Omega_*(t)\equiv \Omega_*(t+T)
\end{equation}
where $c \in \mathbb{R}^N$ is a constant vector and $\Omega_*(t) \subset \mathbb{R}^N$ is smooth, bounded, connected and $T$-periodic domain.
We note that this includes the case where $\Omega_*(t) \equiv \Omega_0$ is a constant domain and $\Omega(t)=\Omega_0+ct$ translates at a constant velocity $c$.
We assume both $\psi_1$ and $\psi_2$ are non-negative and satisfy zero Dirichlet boundary conditions:
\begin{equation}
\psi_1(x,t)=\psi_2(x,t)=0 \qquad\textrm{for } x\in\partial\Omega(t).
\end{equation}
Under certain assumptions on $\Omega(t)$ and $f_1$ which will be made precise, it is known \cite{JA-PhD, JA3} that there exists a unique positive $T$-periodic solution $\psi_1^{*}(x,t)$ to the first equation, \eqref{eq_psi_1}. In this paper, inspired by \cite{PotLew}, we derive conditions under which $(\psi_1^{*}(x,t),0)$ is stable or unstable with respect to adding a small amount of $\psi_2$. Our approach is based upon the principal eigenvalues of suitably constructed periodic-parabolic eigenvalue problems.

In the context of a habitat region whose boundary varies periodically with time or moves with a constant drift, this provides sufficient conditions such that a species can, or cannot, invade an established $T$-periodic population of another species. By a `successful invasion' here we mean that the introduced (invading) species persists rather than its population decaying to zero. Further work is required to deduce the long-time outcome, namely whether the invader replaces the established species or whether there is a state of co-existence.

\section{Assumptions}
Let us state our assumptions on the domain and the nonlinear terms.

Regarding $\Omega_*(t)$, we shall assume there is a one-to-one mapping $h(\cdot ,t): \overline{\Omega_*(t)}\rightarrow \overline{\Omega_0}$ which transforms $\Omega_*(t)$ into a bounded, connected reference domain $\Omega_0$ with sufficiently smooth boundary (at least $C^{2+\varepsilon}$ for some $\varepsilon>0$), and such that the change of variables $\xi=h(x-ct,t)$
transforms the operator $\frac{\partial}{\partial t} - D_n\nabla^2$ on $\Omega(t)$ into $\frac{\partial}{\partial t} -\mathcal{L}_n(\xi,t)$ on $\Omega_0$. Here (for $n=1, 2$)
\begin{equation}
\mathcal{L}_n(\xi,t) u  = D_n \sum_{i,j}a_{ij}(\xi,t)\frac{\partial^2 u}{\partial \xi_i\partial \xi_j} +\sum_j \left(b_j(\xi,t)+D_n d_j(\xi,t)\right)\frac{\partial u}{\partial \xi_j} \qquad\textrm{for } \xi\in \Omega_0,
\end{equation}
where
\begin{equation}
a_{ij}(\xi,t)= \sum_{k}\left(\frac{\partial h_i}{\partial x_k}\frac{\partial h_j}{\partial x_k}\right), \qquad
b_j(\xi,t)= \sum_k c_k\frac{\partial h_j}{\partial x_k} -\frac{\partial h_j}{\partial t}, \qquad
d_j(\xi,t)=\nabla^2 h_j.
\end{equation}
We assume that the map $h$ is such that the coefficients $a_{ij}$, $b_j$, $d_j$ belong to $C^{\alpha,\alpha/2}(\overline{\Omega_0}\times[0,T])$ for some $\alpha>0$, and that $a_{ij}$ is uniformly elliptic. Thus, letting $u_1(\xi,t)=\psi_1(x,t)$ and $u_2(\xi,t)=\psi_2(x,t)$ we obtain a problem on $\Omega_0$ of the form
\begin{equation}
\frac{\partial u_1}{\partial t} = \mathcal{L}_1(\xi,t) u_1 + f_1(u_1, u_2)
\end{equation}
\begin{equation}\label{eq_u2}
\frac{\partial u_2}{\partial t} = \mathcal{L}_2(\xi,t) u_2 + f_2(u_1, u_2)
\end{equation}
with $u_1(\xi,t)=u_2(\xi,t)=0$ on $\partial\Omega_0$.
\begin{example}
Consider an interval $A(t)+ct<x<A(t)+ct+L(t)$ where $L(t)>0$, $c$ is a constant, and where $L(t)$ and $A(t)$ are smooth and $T$-periodic. Letting $\xi=\left(\frac{x-A(t)-ct}{L(t)}\right)L_0$, the problem becomes
\begin{equation}
\frac{\partial u_1}{\partial t} = D_1 \frac{L_0^2}{L(t)^2} \frac{\partial^2 u_1}{\partial \xi^2}+\left(\frac{(c+\dot{A}(t))L_0+\xi\dot{L}(t)}{L(t)}\right)\frac{\partial u_1}{\partial \xi} +f_1( u_1,u_2)
\end{equation}
\begin{equation}
\frac{\partial u_2}{\partial t} = D_2 \frac{L_0^2}{L(t)^2} \frac{\partial^2 u_2}{\partial \xi^2}+\left(\frac{(c+\dot{A}(t))L_0+\xi\dot{L}(t)}{L(t)}\right)\frac{\partial u_2}{\partial \xi} +f_2( u_1,u_2)
\end{equation}
for $0<\xi<L_0$, and with $u_1=u_2=0$ at $\xi=0$ and $\xi=L_0$.
\end{example}

Regarding the nonlinear reaction and competition terms, we shall assume that the function $f_1$ is continuous, and let $F_1(u):= f_1(u,0)$. In particular
\begin{equation}\label{assumptions_on_f1}
f_1(u_1,u_2)= F_1(u_1)+o(1) \qquad \textrm{as } u_2 \rightarrow0.
\end{equation}
We assume that $F_1$ is Lipschitz continuous, differentiable at $0$, and satisfies the following conditions for some $K_1>0$:
\begin{align}\label{assumptions_on_F_1_1}
& F_1(0)=F_1(K_1)=0, \quad F_1'(0)=r_1>0, \quad \frac{F_1(u)}{u}\textrm{ is non-increasing on }u>0.
\end{align}
Under these assumptions, we can write
\begin{align}\label{assumptions_on_F_1_2}
F_1(u)&=u\left(r_1-h_1(u)\right)
\end{align}
where $h_1(0)=0$ and $h_1$ is continuous and non-decreasing for $u\geq0$.

Regarding the function $f_2$, we shall assume that we can write
\begin{align}\label{assumptions_on_f2}
f_2(u_1,u_2)&=u_2\left(r_2-g_2(u_1)\right)+o(u_2) \qquad \textrm{as } u_2 \rightarrow0
\end{align}
where $r_2>0$ and $g_2$ is a continuous function with $g_2(0)=0$ and $g_2(u_1)\geq 0$ for $u_1\geq 0$.

For certain results we will also write $F_2(u):= f_2(0,u)$ and assume that
$F_2$ is Lipschitz continuous, differentiable at $0$, and satisfies the following conditions for some $K_2>0$:
\begin{align}\label{assumptions_on_F_2_1}
& F_2(0)=F_2(K_2)=0, \quad F_2'(0)=r_2>0, \quad \frac{F_2(u)}{u}\textrm{ is non-increasing on }u>0.
\end{align}
Under these assumptions, we can write
\begin{align}\label{assumptions_on_F_2_2}
F_2(u)&=u\left(r_2-h_2(u)\right)
\end{align}
where $h_2(0)=0$ and $h_2$ is continuous and non-decreasing for $u\geq0$.

\section{Approach}
Since $\Omega_*(t)$ is periodic with period $T$, the map $h$ and the coefficients of $\mathcal{L}_1$, $\mathcal{L}_2$ are also $T$-periodic in $t$.
By Theorem 1 of Castro and Lazer \cite{CasLaz} there exist a unique $\mu_1$ and a function $\phi_1(\xi,t)$ such that
\begin{equation}
\frac{\partial \phi_1}{\partial t} - \mathcal{L}_1 \phi_1 = \mu_1\phi_1  \quad \textrm{for } \xi\in\Omega_0,\ t\in\mathbb{R}
\end{equation}
\begin{equation}
\phi_1=0 \quad \textrm{on } \partial\Omega_0, \qquad
\phi_1>0 \quad \textrm{in } \Omega_0, \qquad
\phi_1(\xi,t)\equiv\phi_1(\xi,t+T) .
\end{equation}
This function $\phi_1$ is unique up to scaling \cite[Theorem 1]{CasLaz}, and is called the principal periodic eigenfunction, while $\mu_1$ is called the principal periodic eigenvalue.
Likewise, there exist a unique $\mu_2$ and $\phi_2(\xi,t)$ such that
\begin{equation}
\frac{\partial \phi_2}{\partial t} - \mathcal{L}_2 \phi_2 = \mu_2\phi_2  \quad \textrm{for } \xi\in\Omega_0,\ t\in\mathbb{R}
\end{equation}
\begin{equation}
\phi_2=0 \quad \textrm{on } \partial\Omega_0, \qquad
\phi_2>0 \quad \textrm{in } \Omega_0, \qquad
\phi_2(\xi,t)\equiv\phi_2(\xi,t+T) .
\end{equation}

From the work on a single equation \cite{JA-PhD, JA3}, it is known that if $r_1>\mu_1$ then there exists a unique positive $T$-periodic solution $0 \leq u_1^{*}(\xi,t) \leq K_1$ to the nonlinear problem
\begin{equation}
\frac{\partial u_1^{*}}{\partial t} = \mathcal{L}_1u_1^{*} +F_1(u_1^{*})  \quad \textrm{for } \xi\in\Omega_0, \ t\in\mathbb{R}
\end{equation}
\begin{equation}
u_1^{*}=0 \quad \textrm{on } \partial\Omega_0, \qquad
u_1^{*}>0 \quad \textrm{in } \Omega_0, \qquad
u_1^{*}(\xi,t)\equiv u_1^{*}(\xi,t+T).
\end{equation}
Moreover, if $r_1>\mu_1$ then in a single species model (i.e. $u_2 \equiv 0$), $u_1$ converges uniformly to $u_1^{*}$ in the sense that as $n\rightarrow\infty$, $u_1(\xi,nT+t)$ converges in $C^{2,1}(\overline{\Omega_0}\times[0,T])$ to $u_1^{*}(\xi,t)$ \cite{JA-PhD, JA3}.

Similarly, if \eqref{assumptions_on_F_2_1}, \eqref{assumptions_on_F_2_2} are satisfied, and if $r_2>\mu_2$, then there exists a unique positive $T$-periodic solution $0 \leq u_2^{*}(\xi,t) \leq K_2$ to the nonlinear problem
\begin{equation}
\frac{\partial u_2^{*}}{\partial t} = \mathcal{L}_2u_2^{*} +F_2(u_2^{*})  \quad \textrm{for } \xi\in\Omega_0, \ t\in\mathbb{R}
\end{equation}
\begin{equation}
u_2^{*}=0 \quad \textrm{on } \partial\Omega_0, \qquad
u_2^{*}>0 \quad \textrm{in } \Omega_0, \qquad
u_2^{*}(\xi,t)\equiv u_2^{*}(\xi,t+T).
\end{equation}

Henceforth we shall assume that $r_1>\mu_1$ and consider the established positive $T$-periodic solution of $u_1$:
\begin{equation}
(u_1,u_2)=(u_1^{*}(\xi,t), 0).
\end{equation}
We are interested in conditions under which this is either stable (the species $u_2$ cannot invade) or unstable ($u_2$ can invade successfully). We shall follow the ideas of Potapov and Lewis \cite{PotLew}, and extend their invasibility and non-invasibility theorems to periodic domains (as described above) as well as those moving at constant speed.

As in \cite{PotLew} the approach is to introduce a small amount of $u_2$ and consider the linearised problem. The linearisation of \eqref{eq_u2} about the state $(u_1^{*}(\xi,t), 0)$ leads to the equation
\begin{equation}\label{eq_linearisation}
\frac{\partial u_2}{\partial t} = \mathcal{L}_2(\xi,t) u_2 + u_2\left(r_2-g_2 (u_1^{*}(\xi,t))\right)
\end{equation}
\begin{equation}
u_2(\xi,t)=0 \qquad\textrm{for } \xi\in\partial\Omega_0.
\end{equation}
Now, again using \cite{CasLaz}, we know that there is a principal periodic eigenvalue $\hat{\mu}$ and positive periodic eigenfunction $\phi(\xi,t)$ to the linear periodic-parabolic problem
\begin{equation}\label{eq_muhat}
\frac{\partial \phi}{\partial t} - \mathcal{L}_2(\xi,t) \phi +g_2(u_1^{*}(\xi,t))\phi = \hat{\mu}\phi  \quad \textrm{for } \xi\in\Omega_0,\ t\in\mathbb{R}
\end{equation}
\begin{equation}
\phi(\xi,t)=0 \quad \textrm{on } \partial\Omega_0, \qquad
\phi(\xi,t)>0 \quad \textrm{in } \Omega_0, \qquad
\phi(\xi,t)\equiv\phi(\xi,t+T) .
\end{equation}
Therefore $\phi(\xi,t)e^{(r_2-\hat{\mu})t}$ is a solution to the linearised problem
\eqref{eq_linearisation}.
Hence, by the comparison principle, we see that if $r_2> \hat{\mu}$ then the linearised $u_2$ grows in the presence of $u_1^{*}(\xi,t)$, whereas if $r_2< \hat{\mu}$ then $u_2$ will decay to zero.

\section{Invasibility and non-invasibility results}
Here we derive sufficient conditions guaranteeing either invasibility ($r_2> \hat{\mu}$) or non-invasibility ($r_2< \hat{\mu}$). We also give some examples. The first proposition is based on  \cite[Lemma 6.1]{PotLew}.
\begin{proposition}\label{proposition_r2_mu2_gu1}
Let $f_1$, $f_2$ satisfy assumptions \eqref{assumptions_on_f1}, \eqref{assumptions_on_F_1_1} and \eqref{assumptions_on_f2}. Let $\beta=\sup_{\Omega_0 \times[0, T]} g_2(u_1^{*})$ and suppose that $\beta>0$.
\begin{enumerate}
\item
If $r_2\geq \mu_2 +\beta$
then $u_2$ is able to invade $(u_1^{*}(\xi,t),0)$ successfully.
\item
If $r_2 \leq \mu_2$ then $u_2$ is not able to invade $(u_1^{*}(\xi,t),0)$.
\end{enumerate}
\end{proposition}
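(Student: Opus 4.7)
The plan is to sandwich the principal periodic eigenvalue $\hat\mu$ from \eqref{eq_muhat} strictly between $\mu_2$ and $\mu_2+\beta$, and then read off both conclusions from the linear-stability discussion immediately following \eqref{eq_muhat}. Concretely, I aim to show
\begin{equation*}
\mu_2 \;<\; \hat\mu \;<\; \mu_2+\beta,
\end{equation*}
so that the hypothesis $r_2\geq\mu_2+\beta$ forces $r_2>\hat\mu$ (linear instability, hence $u_2$ invades) and $r_2\leq\mu_2$ forces $r_2<\hat\mu$ (linear stability, hence by the comparison principle $u_2$ decays to $0$).

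For the upper bound I would compare \eqref{eq_muhat} with the eigenvalue problem in which the potential $g_2(u_1^{*})$ is replaced by the constant $\beta$; since adding a constant to the potential merely shifts the principal eigenvalue, that problem has principal eigenvalue $\mu_2+\beta$ (with $\phi_2$ as eigenfunction). For the lower bound I would compare with the zero-potential problem, whose principal eigenvalue is $\mu_2$ and whose eigenfunction is $\phi_2$. The key tool is the standard monotonicity of the principal periodic-parabolic eigenvalue with respect to the zeroth-order coefficient: if $c_1\leq c_2$ pointwise on $\overline{\Omega_0}\times[0,T]$ with $c_1\not\equiv c_2$, then the associated principal eigenvalues satisfy $\lambda_1<\lambda_2$ (this is classical, e.g.\ in Hess's monograph on periodic-parabolic problems).

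To check the strict-inequality hypothesis in both directions: by assumption $0\leq g_2(u_1^{*})\leq \beta$ everywhere. Since $\beta=\sup g_2(u_1^{*})>0$ and $g_2\circ u_1^{*}$ is continuous, $g_2(u_1^{*})$ is strictly positive on a non-empty open set, so $g_2(u_1^{*})\not\equiv 0$, giving $\hat\mu>\mu_2$. Conversely, $u_1^{*}=0$ on $\partial\Omega_0$ and $g_2(0)=0$, so by continuity $g_2(u_1^{*})<\beta$ throughout some boundary neighbourhood; combined with $\beta>0$ this gives $g_2(u_1^{*})\not\equiv\beta$ and hence $\hat\mu<\mu_2+\beta$. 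The two halves of the proposition then follow, the second via the supersolution $v=C\phi(\xi,t)e^{(r_2+\varepsilon-\hat\mu)t}$ with $\varepsilon>0$ chosen so that $r_2+\varepsilon<\hat\mu$ and so that $f_2(u_1,u_2)\leq u_2(r_2+\varepsilon-g_2(u_1^{*}))$ for $u_2$ small and $u_1$ sufficiently close to $u_1^{*}$.

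The only nontrivial step is the strict monotonicity, and that is where I would focus attention if I wanted a self-contained proof. One route is to test \eqref{eq_muhat} against the principal eigenfunction $\phi_2^{*}$ of the formal $L^2$-adjoint problem for $\mu_2$ and integrate over one space-time period $\Omega_0\times[0,T]$: $T$-periodicity kills the boundary term from $\partial_t$, the Dirichlet conditions close the spatial integration by parts, and one is left with the identity
\begin{equation*}
(\hat\mu-\mu_2)\iint \phi\,\phi_2^{*} \;=\; \iint g_2(u_1^{*})\,\phi\,\phi_2^{*},
\end{equation*}
whose right-hand side is strictly positive because $\phi,\phi_2^{*}>0$ in the interior and $g_2(u_1^{*})$ is positive on an open set. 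The symmetric argument, testing the adjoint problem for $\hat\mu$ against $\phi_2$, delivers the upper bound. Everything else is bookkeeping.
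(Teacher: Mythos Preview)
Your proposal is correct and follows essentially the same route as the paper: both arguments sandwich $\hat\mu$ via the monotonicity of the principal periodic-parabolic eigenvalue in the zeroth-order term, observing that $0\lneq g_2(u_1^{*})\lneq\beta$ yields $\mu_2<\hat\mu<\mu_2+\beta$, from which the two conclusions are immediate. Your write-up is simply more detailed, supplying explicit justifications for the strict inequalities and a sketch of the monotonicity lemma that the paper leaves implicit.
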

\begin{proof}
The assumptions imply that
$0\lneq  g_2(u_1^{*}(\xi,t)) \lneq  \beta$ and thus that
$\mu_2<\hat{\mu}<\mu_{\beta}$
where $\mu_{\beta}$ is the periodic principal eigenvalue of the equation
\begin{equation}
\frac{\partial \phi}{\partial t} - \mathcal{L}_2(\xi,t) \phi +\beta\phi = \mu_{\beta} \phi  \qquad \textrm{for } \xi\in\Omega_0,\ t\in\mathbb{R}.
\end{equation}
But this is precisely $\mu_{\beta}= \beta+ \mu_2$. So, $\mu_2<\hat{\mu}<\mu_{\beta}=\beta+ \mu_2$.
Consequently, if $r_2 \geq \mu_2 + \beta$ then $r_2>\hat{\mu}$, whereas if $r_2 \leq \mu_2$ then $r_2<\hat{\mu}$.
\end{proof}
We give two typical applications of Proposition \ref{proposition_r2_mu2_gu1}.
\begin{example}
Consider an interval moving at constant speed: $ct<x<ct+L_0$. The problem becomes
\begin{equation}\label{eq_c_u1}
\frac{\partial u_1}{\partial t} = D_1\frac{\partial^2 u_1}{\partial \xi^2} +c \frac{\partial u_1}{\partial \xi} + f_1(u_1,u_2)
\end{equation}
\begin{equation}\label{eq_c_u2}
\frac{\partial u_2}{\partial t} = D_2 \frac{\partial^2 u_2}{\partial \xi^2} +c \frac{\partial u_2}{\partial \xi} + f_2(u_1,u_2).
\end{equation}
The values of $\mu_1$, $\mu_2$ are known exactly: $\mu_1= \frac{D_1 \pi^2}{L_0^2}+\frac{c^2}{4D_1}$ and $\mu_2= \frac{D_2 \pi^2}{L_0^2}+\frac{c^2}{4D_2}$.
We assume that $f_1$, $f_2$ satisfy assumptions \eqref{assumptions_on_f1}, \eqref{assumptions_on_F_1_1} and \eqref{assumptions_on_f2}, and that
\begin{equation}\label{eq_cond_r1}
r_1> \frac{D_1 \pi^2}{L_0^2}+\frac{c^2}{4D_1}.
\end{equation}
Then, in the absence of $u_2$, the first species $u_1(\xi,t)$ converges to the unique positive stationary state $0 \lneq U_1(\xi) \leq K_1$ satisfying
\begin{equation}\label{eq_U1_ode}
D_1 U_1'' +cU_1' +F_1(U_1)=0, \qquad U_1(0)=U_1(L_0)=0 .
\end{equation}
Certainly $\sup_{[0,L_0]} g_2(U_1)\leq \hat{\beta}:=\sup_{[0, K_1]} g_2$.
Proposition \ref{proposition_r2_mu2_gu1} implies that if
$r_2 \leq \frac{D_2 \pi^2}{L_0^2}+\frac{c^2}{4D_2}$
then $u_2$ cannot invade the positive stationary state $U_1(\xi)$. However, if
\begin{equation}\label{eq_cond_r2_beta}
r_2 \geq \frac{D_2 \pi^2}{L_0^2}+\frac{c^2}{4D_2} +\hat{\beta}
\end{equation}
then $u_2$ can invade $U_1(\xi)$ successfully. Note in particular that if
\begin{equation}\label{eq_cond_possible}
r_1 > \frac{c^2}{4D_1}\qquad \textrm{and} \qquad
r_2 - \frac{c^2}{4D_2} > \hat{\beta}>0
\end{equation}
then conditions \eqref{eq_cond_r1} and \eqref{eq_cond_r2_beta} become
\begin{equation}
L_0>\pi\sqrt{\frac{D_1}{r_1-\frac{c^2}{4D_1}}}
\qquad\textrm{and} \qquad
L_0>\pi\sqrt{\frac{D_2}{r_2-\frac{c^2}{4D_2}-\hat{\beta}}} \ .
\end{equation}
Therefore if \eqref{eq_cond_possible} holds then $u_2$ can invade $U_1(\xi)$ for $L_0$ large enough.

In \cite{PotLew} Potapov and Lewis proved the above results for the case $c=0$ and for $f_1$, $f_2$ of the form \eqref{eq_f1f2_LotkaVolterra}.
\end{example}

\begin{example}
Consider an interval $A(t)<x<A(t)+L(t)$ where $A(t)$ and $L(t)$ are both $T$-periodic. Again let $\hat{\beta}=\sup_{[0, K_1]} g_2$. For a general $T$-periodic interval, the values of $\mu_1$ and $\mu_2$ are not known exactly but \cite{JA-PhD, JA3} provides upper and lower bounds on them. To use these bounds we must define
\begin{align}
\overline{Q}(t)=\max_{0\leq\eta\leq 1} \left(\frac{\eta^2\ddot{L}(t)L(t)}{2} + \eta \ddot{A}(t)L(t) \right),\nonumber\\
\underline{Q}(t)=-\min_{0\leq\eta\leq 1} \left( \frac{\eta^2\ddot{L}(t)L(t)}{2} + \eta \ddot{A}(t)L(t) \right).
\end{align}

We can conclude from Proposition \ref{proposition_r2_mu2_gu1}, together with the lower bounds for $\mu_2$ (see \cite{JA-PhD, JA3}), that if
\begin{equation}
r_2 \leq \max\left\{\frac{1}{T}\int_0^T  \frac{D\pi^2}{L(t)^2} dt, \  \frac{1}{T}\int_0^T \left(\frac{D\pi^2}{L(t)^2} +\frac{\dot{A}(t)^2}{4D} -\frac{\overline{Q}(t)}{2D}\right) dt \right\}
\end{equation}
then $r_2\leq \mu_2$ and so $u_2$ cannot invade $(u_1^{*}(\xi,t),0)$.

Using Proposition \ref{proposition_r2_mu2_gu1}, together with the upper bounds for $\mu_2$ (see \cite{JA-PhD, JA3}), we conclude that if
\begin{equation}\label{eq_r2_cond_periodic}
r_2 \geq \hat{\beta} + \frac{1}{T}\int_0^T \left(\frac{D\pi^2}{L(t)^2}+\frac{\dot{A}(t)^2}{4D}+\frac{\underline{Q}(t)}{2D}\right) dt,
\end{equation}
or, if
\begin{equation}
\min_{[0,T]}(A+L) - \max_{[0,T]} A >0  \qquad \textrm{and} \qquad
r_2 \geq \hat{\beta} +  \frac{D\pi^2}{(\min(A+L) - \max A)^2},
\end{equation}
then $r_2\geq \hat{\beta}+\mu_2$ and so $u_2$ is able to invade $(u_1^{*}(\xi,t),0)$ successfully.

Suppose that $L(t)=l\left(\frac{\omega t}{2\pi}\right)$ and $A(t)= a\left( \frac{\omega t}{2\pi}\right)$ for some $1$-periodic functions $l$, $a$. Note that if
\begin{equation}\label{eq_r2_cond_periodic_omega}
r_2 > \hat{\beta} + \int_0^1 \frac{D\pi^2}{l(s)^2} ds
\end{equation}
then \eqref{eq_r2_cond_periodic} will be satisfied for $\omega$ small enough.
\end{example}

Our second proposition is based on ideas from the proof of \cite[Theorem 6.1]{PotLew}.
\begin{proposition} \label{proposition_hu2_gu1}
Let $f_1$, $f_2$ satisfy assumptions \eqref{assumptions_on_f1}, \eqref{assumptions_on_F_1_1}, \eqref{assumptions_on_f2}, \eqref{assumptions_on_F_2_1} and \eqref{assumptions_on_F_2_2}.
Assume $r_2>\mu_2$ and let $u_2^{*}(\xi,t)$ be as above.
If $g_2(u_1^{*}) \lneq  h_2(u_2^{*})$ then $u_2$ can invade $u_1^{*}(\xi,t)$. 
If the opposite inequality holds then $u_2$ cannot invade $u_1^{*}(\xi,t)$.
\end{proposition}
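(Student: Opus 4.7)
The key observation is that $u_2^{*}$ is itself a principal periodic eigenfunction for a suitably reinterpreted eigenvalue problem. Starting from the nonlinear equation $\partial_t u_2^{*} = \mathcal{L}_2 u_2^{*} + u_2^{*}(r_2 - h_2(u_2^{*}))$ and rearranging, I see that $u_2^{*}$ is a positive $T$-periodic solution of
\[
\frac{\partial u_2^{*}}{\partial t} - \mathcal{L}_2 u_2^{*} + h_2(u_2^{*}) u_2^{*} = r_2 u_2^{*}, \qquad u_2^{*}=0 \textrm{ on } \partial\Omega_0.
\]
By the uniqueness part of Theorem 1 of \cite{CasLaz} applied to the operator $\partial_t-\mathcal{L}_2+h_2(u_2^{*})$, the value $r_2$ must coincide with its principal periodic eigenvalue. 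Thus the proposition reduces to comparing the two principal periodic eigenvalues $\hat{\mu}$ and $r_2$, whose only difference is the zeroth-order potential $g_2(u_1^{*})$ versus $h_2(u_2^{*})$; this is exactly the strict-monotonicity setup already exploited in the proof of Proposition \ref{proposition_r2_mu2_gu1}.

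To carry out the comparison I will use the adjoint. Let $\phi^{*}(\xi,t)>0$ be the $T$-periodic principal eigenfunction of the adjoint problem
\[
-\frac{\partial \phi^{*}}{\partial t} - \mathcal{L}_2^{*} \phi^{*} + g_2(u_1^{*})\phi^{*} = \hat{\mu}\phi^{*}, \qquad \phi^{*}=0 \textrm{ on }\partial\Omega_0,
\]
whose existence (with the same eigenvalue $\hat{\mu}$) follows from the Krein--Rutman type periodic-parabolic theory underlying \cite{CasLaz}. Multiplying the eigenvalue equation for $u_2^{*}$ by $\phi^{*}$, integrating over $\Omega_0\times[0,T]$, and transferring time and space derivatives onto $\phi^{*}$ using $T$-periodicity in $t$ and the Dirichlet conditions on $\partial\Omega_0$, I expect to obtain the identity
\[
(\hat{\mu}-r_2)\int_0^T\!\!\int_{\Omega_0} u_2^{*}\phi^{*}\, d\xi\, dt = \int_0^T\!\!\int_{\Omega_0} \bigl(g_2(u_1^{*})-h_2(u_2^{*})\bigr) u_2^{*}\phi^{*}\, d\xi\, dt.
\]
Since $u_2^{*}\phi^{*}>0$ in $\Omega_0\times\mathbb{R}$, the sign of $\hat{\mu}-r_2$ matches the sign of $g_2(u_1^{*})-h_2(u_2^{*})$. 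Hence $g_2(u_1^{*})\lneq h_2(u_2^{*})$ yields $\hat{\mu}<r_2$, so by the linearisation argument of Section 3 the species $u_2$ invades; the reversed strict inequality yields $\hat{\mu}>r_2$ and non-invasion.

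The main obstacle is the careful justification of the adjoint step: the coefficients $a_{ij}$ of $\mathcal{L}_2$ have only Hölder regularity, so $\mathcal{L}_2^{*}$ must be handled in a weak/duality sense, and the boundary-term cancellations in the integration by parts rely on both $u_2^{*}$ and $\phi^{*}$ vanishing on $\partial\Omega_0$ and being $T$-periodic. If one prefers to avoid the adjoint entirely, a purely maximum-principle alternative is to study $w=\phi/u_2^{*}$ (or its reciprocal), derive a periodic-parabolic equation for $w$ with zeroth-order coefficient $g_2(u_1^{*})-h_2(u_2^{*})$ and source $(\hat{\mu}-r_2)w$, and apply the comparison principle at an interior extremum of the periodic $w$ to deduce the sign of $\hat{\mu}-r_2$; this bypasses $\mathcal{L}_2^{*}$ but requires more care with the boundary behaviour of the ratio.
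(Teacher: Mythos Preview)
Your proposal is correct and follows essentially the same approach as the paper. Both arguments rest on the same key observation: rewriting the equation for $u_2^{*}$ shows that $u_2^{*}$ is the positive periodic eigenfunction of $\partial_t-\mathcal{L}_2+h_2(u_2^{*})$ with principal eigenvalue $r_2$, so the question reduces to strict monotonicity of the principal periodic eigenvalue in the zeroth-order potential. The only difference is that the paper outsources this monotonicity to a citation (Lemma~15.5 of \cite{Hess}), whereas you supply an explicit proof via the adjoint eigenfunction and the integral identity; your maximum-principle alternative with the ratio $\phi/u_2^{*}$ is in fact closer in spirit to how such lemmas are typically proved in \cite{Hess}.
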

\begin{proof}
If $g_2(u_1^{*}) \lneq  h_2(u_2^{*})$, then $\hat{\mu}< \mu_{*}$ where $\mu_{*}$ is the principal periodic eigenvalue of the equation
\begin{equation}\label{eq_h2_u2}
\frac{\partial \phi}{\partial t} - \mathcal{L}_2(\xi,t) \phi +h_2(u_2^{*}(\xi,t))\phi = \mu_{*} \phi  \qquad \textrm{for } \xi\in\Omega_0,\ t\in\mathbb{R}.
\end{equation}
(This follows by the same proof as \cite[Lemma 15.5]{Hess}; see also Section 2.5.2 of \cite{CantrellCosner}.)
But equation \eqref{eq_h2_u2} is satisfied by $u_2^{*}(\xi,t)$, with $\mu_{*}=r_2$, and so by uniqueness these must be the principal eigenfunction and eigenvalue. Therefore, $\hat{\mu}<\mu_{*} =r_2$. If the inequality is reversed then instead we get $\hat{\mu}> \mu_{*} =r_2$.
\end{proof}

Next we apply Proposition \ref{proposition_hu2_gu1}
for the case of an interval moving at constant speed $c$ (Corollary \ref{corollary_1}) and the case of a periodic interval (Corollary \ref{corollary_2}).
\begin{corollary}\label{corollary_1} (See \cite[Theorem 6.1]{PotLew}, where they use this method to prove the case $c=0$, $n=1$.)\\
Consider the problem \eqref{eq_c_u1}, \eqref{eq_c_u2}.
Let $f_1$, $f_2$ satisfy assumptions \eqref{assumptions_on_f1}, \eqref{assumptions_on_F_1_1}, \eqref{assumptions_on_F_1_2} \eqref{assumptions_on_f2}, \eqref{assumptions_on_F_2_1} and \eqref{assumptions_on_F_2_2}, and suppose that for some $n>0$,
\begin{equation}\label{eq_gh}
h_1(u)=(\hat{h}_1u)^n, \qquad g_2(u)=(\hat{g}_2u)^n, \qquad h_2(u)=(\hat{h}_2u)^n
\end{equation}
for positive constants $\hat{h}_1$, $\hat{g}_2$ and $\hat{h}_2$.
Assume that $r_1 >\frac{D_1 \pi^2}{L_0^2}+\frac{c^2}{4D_1}$, $r_2 > \frac{D_2 \pi^2}{L_0^2}+\frac{c^2}{4D_2}$, and let $U_1(\xi)$ be the positive stationary state satisfying \eqref{eq_U1_ode}, and $U_2(\xi)$ the positive stationary state satisfying
\begin{equation}
D_2 U_2'' +cU_2' +F_2(U_2)=0, \qquad U_2(0)=U_2(L_0)=0 .
\end{equation}
\begin{enumerate}
\item Suppose $c=0$.\\
If $\frac{r_2}{r_1} \geq \frac{D_2}{D_1}$ and $\left(\frac{r_2}{r_1}\right)^{\frac{1}{n}}\hat{h}_1 \geq \hat{g}_2$, and at least one of these inequalities is strict, then $u_2$ can invade $(U_1(\xi),0)$.\\
If $\frac{r_2}{r_1} \leq \frac{D_2}{D_1}$ and $\left(\frac{r_2}{r_1}\right)^{\frac{1}{n}}\hat{h}_1 \leq \hat{g}_2$, and at least one of these inequalities is strict, then $u_2$ cannot invade $(U_1(\xi),0)$.
\item Suppose $D_1=D_2$ (and $c$ may be zero or non-zero). \\
If $r_2 \geq r_1$ and $\left(\frac{r_2}{r_1}\right)^{\frac{1}{n}}\hat{h}_1 \geq \hat{g}_2$, and at least one of these inequalities is strict, then $u_2$ can invade $(U_1(\xi),0)$.\\
If $r_2 \leq r_1$ and $\left(\frac{r_2}{r_1}\right)^{\frac{1}{n}}\hat{h}_1 \leq \hat{g}_2$, and at least one of these inequalities is strict, then $u_2$ cannot invade $(U_1(\xi),0)$.
\item Suppose $c\neq 0$ and $D_1\neq D_2$.\\
If $r_2 -\frac{c^2}{4D_2} \geq \frac{D_2}{D_1} \left(r_1 -\frac{c^2}{4D_1}\right)$
and $\frac{\hat{g}_2}{\hat{h}_1} \leq \left(\frac{D_2}{D_1}\right)^{\frac{1}{n}} e^{-\frac{L_0}{2}\left\vert c (\frac{1}{D_2}-\frac{1}{D_1})\right\vert}$ then $u_2$ can invade $(U_1(\xi),0)$.\\
If $r_2 -\frac{c^2}{4D_2} \leq \frac{D_2}{D_1} \left(r_1 -\frac{c^2}{4D_1}\right)$ and $\frac{\hat{g}_2}{\hat{h}_1} \geq \left(\frac{D_2}{D_1}\right)^{\frac{1}{n}} e^{\frac{L_0}{2}\left\vert c (\frac{1}{D_2}-\frac{1}{D_1})\right\vert}$ then $u_2$ cannot invade $(U_1(\xi),0)$.
\end{enumerate}
\end{corollary}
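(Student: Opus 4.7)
My plan is to apply Proposition~\ref{proposition_hu2_gu1}, which under the power form \eqref{eq_gh} reduces the invasibility question to the pointwise scalar comparison $\hat g_2 U_1\lessgtr\hat h_2 U_2$ on $[0,L_0]$. Set $V_n:=\hat h_n U_n$; each $V_n$ satisfies
\begin{equation*}
D_n V_n''+cV_n'+V_n(r_n-V_n^n)=0,\qquad V_n(0)=V_n(L_0)=0,
\end{equation*}
and the maximum principle at an interior maximum gives $V_n^n\leq r_n$ on $[0,L_0]$. In each of the three cases I shall construct a positive subsolution of the $V_2$ Dirichlet problem out of a rescaling of $V_1$; by the sub-/supersolution method together with the uniqueness of the positive solution (under the assumptions on $F_n$) this subsolution must lie below $V_2$, and the quantitative hypothesis on $\hat g_2/\hat h_1$ will chain with it to deliver $\hat g_2 U_1\leq\hat h_2 U_2$.

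For Cases~1 and~2 the natural ansatz is the constant rescaling $W:=(r_2/r_1)^{1/n}V_1$. Substituting and eliminating $V_1''$ via $V_1$'s equation yields
\begin{equation*}
D_2W''+cW'+r_2W-W^{n+1}=\Gamma\,W\Bigl(1-\tfrac{W^n}{r_2}\Bigr)+\tfrac{c(D_1-D_2)}{D_1}W',\qquad \Gamma:=\tfrac{r_2D_1-r_1D_2}{D_1}.
\end{equation*}
In Case~1 ($c=0$) and Case~2 ($D_1=D_2$) the residual $W'$ term vanishes identically; the first hypothesis makes $\Gamma\geq 0$, the bound $W^n=(r_2/r_1)V_1^n\leq r_2$ from the maximum principle makes the parenthesis $\geq 0$, and so $W$ is a subsolution. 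Hence $(r_2/r_1)^{1/n}V_1\leq V_2$; combined with the hypothesis $\hat g_2\leq (r_2/r_1)^{1/n}\hat h_1$ (and a strong maximum principle step to pass strictness from whichever input is strict to pointwise strictness of the final comparison) this gives $g_2(U_1)\lneq h_2(U_2)$, so Proposition~\ref{proposition_hu2_gu1} gives invasion. The non-invasion direction is the symmetric computation with the roles of $V_1$ and $V_2$ interchanged.

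Case~3 is the main obstacle, since when $D_1\neq D_2$ and $c\neq 0$ the residual $W'$ term above is sign-indefinite and cannot be controlled by a constant rescaling. My plan is to try the exponentially weighted ansatz $W:=Ce^{a\xi}V_1$ and to choose $a$ so that the $W'$ contribution cancels; a direct calculation forces $a=\tfrac{c}{2}(1/D_1-1/D_2)$, and the subsolution inequality then collapses to
\begin{equation*}
\bigl(\tilde r_2-(D_2/D_1)\tilde r_1\bigr)+V_1^n\bigl((D_2/D_1)-C^n e^{na\xi}\bigr)\geq 0,\qquad \tilde r_n:=r_n-\tfrac{c^2}{4D_n}.
\end{equation*}
The first hypothesis of Case~3 handles the first bracket; the second bracket is $\geq 0$ for every $\xi\in[0,L_0]$ precisely when $C^n\leq (D_2/D_1)\min(1,e^{n\delta L_0})$, where $\delta:=-a=\tfrac{c}{2}(1/D_2-1/D_1)$. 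Taking $C$ at this maximum makes $W$ a subsolution, so $V_1\leq V_2\,e^{\delta\xi}/C$; taking the supremum of the exponential prefactor over $\xi\in[0,L_0]$ and combining the two sign cases of $\delta$ produces the uniform bound $V_1\leq (D_1/D_2)^{1/n}e^{(L_0/2)|c(1/D_2-1/D_1)|}V_2$, whose reciprocal is exactly the second hypothesis of Case~3, so $\hat g_2 U_1\leq\hat h_2 U_2$ and invasion follows. The non-invasion direction is again the dual computation with $1\leftrightarrow 2$ and $\delta\mapsto-\delta$, which explains why the absolute value produces the same constant with the opposite inequality. The hard part is threading the exponential weight through cleanly: (i) the cancellation of the $W'$ term fixes $a$ uniquely, (ii) the first bracket is controlled by the first hypothesis, and (iii) the endpoint at which the exponential attains its worst value must be pinned correctly, so that the resulting constant matches the stated hypothesis with no slack.
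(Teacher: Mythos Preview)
Your proof is correct and follows essentially the same route as the paper's. The only differences are cosmetic: you normalise via $V_n=\hat h_nU_n$ so the nonlinearity becomes $V_n^n$, whereas the paper carries $\hat h_n$ explicitly; and in Case~3 you place the exponential weight $e^{a\xi}$ directly into the subsolution ansatz $W=Ce^{a\xi}V_1$, while the paper first transforms both functions via $v_n=U_n e^{c\xi/(2D_n)}$ and then uses a constant multiple $av_1$---these two choices are algebraically equivalent (your $a$ coincides with $\tfrac{c}{2D_1}-\tfrac{c}{2D_2}$, and your $C\hat h_1/\hat h_2$ plays the role of the paper's $a$). Your derivation of the threshold constant in Case~3 and the handling of strictness in Cases~1--2 match the paper's argument.
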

\begin{proof}
We shall show that the conditions from Proposition \ref{proposition_hu2_gu1} hold.
Since $g_2$ and $h_2$ are given by \eqref{eq_gh}, what we need to show for the invasibility is that
\begin{equation}\label{eq_aim1_gU2_hU1}
\hat{g}_2 U_1 \leq \hat{h}_2 U_2
\end{equation}
with strict inequality somewhere. Let $v_1(\xi)=U_1(\xi)e^{\frac{c\xi}{2D_1}}$ and $v_2(\xi)=U_2(\xi)e^{\frac{c\xi}{2D_2}}$.
Then if we can choose $a>0$ such that
\begin{equation} \label{eq_*1}
av_1(\xi)=aU_1(\xi)e^{\frac{c\xi}{2D_1}} \leq U_2(\xi)e^{\frac{c\xi}{2D_2}}=v_2(\xi)
\end{equation}
and also
\begin{equation}\label{eq_*2}
\hat{g}_2  \leq a \hat{h}_2 e^{\frac{c\xi}{2}(\frac{1}{D_1}-\frac{1}{D_2})}
\end{equation}
and not both are equalities, then \eqref{eq_aim1_gU2_hU1} will hold and we reach the conclusion.

In order to choose suitable $a$, we note that $v_1$ and $v_2$ satisfy
\begin{equation}
0= D_1 v_1'' +\left(r_1 -\frac{c^2}{4D_1} -\hat{h}_1^n e^{-\frac{c\xi}{2D_1}n} v_1^n\right)v_1,
\end{equation}
\begin{equation}
0= D_2 v_2'' +\left(r_2 -\frac{c^2}{4D_2} -\hat{h}_2^n e^{-\frac{c\xi}{2D_2}n} v_2^n\right)v_2.
\end{equation}
Therefore, $av_1$ will be a subsolution for $v_2$ (so \eqref{eq_*1} holds) as long as
\begin{align}
0 &\leq D_2 v_1'' +\left(r_2 -\frac{c^2}{4D_2} -\hat{h}_2^n a^n e^{-\frac{c\xi}{2D_2}n} v_1^n\right)v_1 \\
 &= - \frac{D_2}{D_1} \left(r_1 -\frac{c^2}{4D_1} -\hat{h}_1^n e^{-\frac{c\xi}{2D_1}n} v_1^n\right)v_1 +\left(r_2 -\frac{c^2}{4D_2} -\hat{h}_2^n a^n e^{-\frac{c\xi}{2D_2}n} v_1^n\right)v_1 \\
&= \left(r_2 -\frac{c^2}{4D_2} - \frac{D_2}{D_1} \left(r_1 -\frac{c^2}{4D_1}\right) -\left( \hat{h}_2^n a^n e^{-\frac{c\xi}{2D_2}n} - \frac{D_2}{D_1} \hat{h}_1^n e^{-\frac{c\xi}{2D_1}n}\right)  v_1^n \right) v_1. \label{eq_subsol_inequality}
\end{align}

\begin{enumerate}
\item
First let us consider the case $c=0$, so $v_2=U_2$, $v_1=U_1$. In this special case $c=0$, condition \eqref{eq_subsol_inequality} becomes that
\begin{align}
0 \leq \left(r_2 - \frac{D_2}{D_1} r_1 -\left( \hat{h}_2^n a^n -\frac{D_2}{D_1}  \hat{h}_1^n \right)  U_1^n \right) U_1.
\end{align}
This is satisfied if $\frac{r_2}{r_1} \geq \frac{D_2}{D_1}$ and if we choose
\begin{equation}\label{eq_a}
a= \left(\frac{r_2}{r_1}\right)^{\frac{1}{n}}\frac{\hat{h}_1}{\hat{h}_2},
\end{equation}
since then
\begin{equation}
r_2 - \frac{D_2}{D_1} r_1 -\left(\hat{h}_2^n a^n -\frac{D_2}{D_1}  \hat{h}_1^n \right)U_1^n
=\left(r_2 - \frac{D_2}{D_1} r_1\right) \left(1-\frac{\hat{h}_1^n}{r_1}U_1^n\right) \geq 0.
\end{equation}
Here we have used the fact that, due to the assumptions \eqref{assumptions_on_F_1_2} and \eqref{eq_gh}, we have
\begin{equation}
U_1\leq K_1 = \frac{r_1^{\frac{1}{n}}}{\hat{h}_1}.
\end{equation}
So, if $\frac{r_2}{r_1} \geq \frac{D_2}{D_1}$ and $a$ is given by equation \eqref{eq_a} then we have \eqref{eq_*1}. Since $c=0$, the condition \eqref{eq_*2} will also be satisfied for this choice of $a$ as long as
\begin{equation}\label{eq_rhg}
\left(\frac{r_2}{r_1}\right)^{\frac{1}{n}}\hat{h}_1 \geq \hat{g}_2.
\end{equation}

For the non-invasibility result we need to reverse the inequalities in \eqref{eq_*1} and \eqref{eq_*2}. Therefore we can get this by exactly the same proof but with the opposite inequalities:
$\frac{r_2}{r_1} \leq \frac{D_2}{D_1}$ and $\left(\frac{r_2}{r_1}\right)^{\frac{1}{n}}\hat{h}_1 \leq \hat{g}_2$.

\item Next allow $c$ to be either zero or non-zero, but suppose that $D_1=D_2 =D$.
Then, condition \eqref{eq_subsol_inequality} becomes that
\begin{align}
0&\leq \left(r_2  - r_1 -\left( \hat{h}_2^n a^n - \hat{h}_1^n \right)  U_1^n \right) v_1.
\end{align}
This is satisfied if $\frac{r_2}{r_1} \geq 1$ and if we choose $a$ according to equation \eqref{eq_a}, since then
\begin{equation}
r_2 -r_1 -\left(\hat{h}_2^n a^n - \hat{h}_1^n \right)U_1^n
=\left(r_2 -  r_1\right) \left(1-\frac{\hat{h}_1^n}{r_1}U_1^n\right) \geq 0.
\end{equation}
Since $D_1=D_2$, the condition \eqref{eq_*2} will also be satisfied for this choice of $a$ as long as \eqref{eq_rhg} holds.

For the non-invasibility result we need to reverse the inequalities in \eqref{eq_*1} and \eqref{eq_*2}. Therefore we can get this by exactly the same proof but with the opposite inequalities.

\item Finally we consider the case $c\neq 0$, $D_1 \neq D_2$, and we wish to find conditions such that \eqref{eq_subsol_inequality} holds.
Certainly this inequality will hold if both
\begin{equation}\label{eq_ineq_for_rD}
r_2 -\frac{c^2}{4D_2} - \frac{D_2}{D_1} \left(r_1 -\frac{c^2}{4D_1}\right) \geq 0
\end{equation}
and
\begin{equation}\label{eq_ineq_for_a1}
-\hat{h}_2^n a^n e^{-\frac{c\xi}{2D_2}n} + \frac{D_2}{D_1} \hat{h}_1^n e^{-\frac{c\xi}{2D_1}n} \geq 0.
\end{equation}
The condition \eqref{eq_ineq_for_a1} can be written as
\begin{equation}\label{eq_ineq_for_a2}
a\leq \left(\frac{D_2}{D_1}\right)^{\frac{1}{n}} \frac{\hat{h}_1}{\hat{h}_2} e^{\frac{c\xi}{2}(\frac{1}{D_2}-\frac{1}{D_1})}.
\end{equation}
Now we also need \eqref{eq_*2} to be satisfied, which becomes
\begin{equation}
\frac{\hat{g}_2}{\hat{h}_2} e^{\frac{c\xi}{2}(\frac{1}{D_2}-\frac{1}{D_1})} \leq a.
\end{equation}
Therefore, if \eqref{eq_ineq_for_rD} holds and if we can choose $a$ to satisfy
\begin{equation}
\frac{\hat{g}_2}{\hat{h}_2} e^{\frac{c\xi}{2}(\frac{1}{D_2}-\frac{1}{D_1})} \leq a \leq \left(\frac{D_2}{D_1}\right)^{\frac{1}{n}} \frac{\hat{h}_1}{\hat{h}_2} e^{\frac{c\xi}{2}(\frac{1}{D_2}-\frac{1}{D_1})}
\end{equation}
for all $0\leq \xi\leq L_0$, then we have both \eqref{eq_*1} and \eqref{eq_*2}, and we reach the conclusion. We can choose $a$ as required provided that
\begin{equation}
\frac{\hat{g}_2}{\hat{h}_1} \max_{[0,L_0]} e^{\frac{c\xi}{2}(\frac{1}{D_2}-\frac{1}{D_1})} \leq \left(\frac{D_2}{D_1}\right)^{\frac{1}{n}} \min_{[0,L_0]} e^{\frac{c\xi}{2}(\frac{1}{D_2}-\frac{1}{D_1})}.
\end{equation}
It is straightforward to calculate that
\begin{equation}
\frac{\min_{[0,L_0]} e^{\frac{c\xi}{2}(\frac{1}{D_2}-\frac{1}{D_1})} }{ \max_{[0,L_0]} e^{\frac{c\xi}{2}(\frac{1}{D_2}-\frac{1}{D_1})}}=
e^{-\frac{L_0}{2}\left\vert c (\frac{1}{D_2}-\frac{1}{D_1})\right\vert},
\end{equation}
and so we can choose $a$ as required provided that
\begin{align}\label{eq_ineq_a_final}
\frac{\hat{g}_2}{\hat{h}_1} \leq \left(\frac{D_2}{D_1}\right)^{\frac{1}{n}} e^{-\frac{L_0}{2}\left\vert c (\frac{1}{D_2}-\frac{1}{D_1})\right\vert}.
\end{align}
Overall, this means that if \eqref{eq_ineq_for_rD} and \eqref{eq_ineq_a_final} hold, then $u_2$ can invade $U_1(\xi)$.

For the non-invasibility result (i.e. to ensure that $u_2$ decays to zero), we need to reverse the inequalities in \eqref{eq_*1} and \eqref{eq_*2}.
Therefore, as well as reversing the inequality from \eqref{eq_ineq_for_rD},
we now require $a$ such that
\begin{equation}
\frac{\hat{g}_2}{\hat{h}_2} e^{\frac{c\xi}{2}(\frac{1}{D_2}-\frac{1}{D_1})} \geq a \geq \left(\frac{D_2}{D_1}\right)^{\frac{1}{n}} \frac{\hat{h}_1}{\hat{h}_2} e^{\frac{c\xi}{2}(\frac{1}{D_2}-\frac{1}{D_1})}
\end{equation}
for all $0\leq \xi\leq L_0$.
We can choose $a$ as required provided that
\begin{equation}
\frac{\hat{g}_2}{\hat{h}_1} \min_{[0,L_0]} e^{\frac{c\xi}{2}(\frac{1}{D_2}-\frac{1}{D_1})} \geq \left(\frac{D_2}{D_1}\right)^{\frac{1}{n}} \max_{[0,L_0]} e^{\frac{c\xi}{2}(\frac{1}{D_2}-\frac{1}{D_1})},
\end{equation}
which becomes the requirement that
\begin{align}
\frac{\hat{g}_2}{\hat{h}_1} \geq \left(\frac{D_2}{D_1}\right)^{\frac{1}{n}} e^{\frac{L_0}{2}\left\vert c (\frac{1}{D_2}-\frac{1}{D_1})\right\vert}.
\end{align}
\end{enumerate}
\end{proof}

To conclude the paper, we apply Proposition \ref{proposition_hu2_gu1} in a similar way but for the case of a periodic interval.
\begin{corollary}\label{corollary_2}
Consider the domain $A(t)<x<A(t)+L(t)$ where $A(t)$ and $L(t)$ are $T$-periodic and are not both constants. Let $f_1$, $f_2$ satisfy assumptions \eqref{assumptions_on_f1}, \eqref{assumptions_on_F_1_1}, \eqref{assumptions_on_F_1_2} \eqref{assumptions_on_f2}, \eqref{assumptions_on_F_2_1} and \eqref{assumptions_on_F_2_2}, and suppose that
$h_1$, $g_2$ and $h_2$ are given by \eqref{eq_gh} for some $n>0$ and positive constants $\hat{h}_1$, $\hat{g}_2$ and $\hat{h}_2$.

Assume that $r_1 >\mu_1$, $r_2 > \mu_2$, and let $u_1^{*}(\xi,t)$ and $u_2^{*}(\xi,t)$ be as above.
\begin{enumerate}
\item Suppose $D_1=D_2$. \\
If $r_2 \geq r_1$ and $\left(\frac{r_2}{r_1}\right)^{\frac{1}{n}}\hat{h}_1 \geq \hat{g}_2$, and at least one of these inequalities is strict, then $u_2$ can invade $(u_1^{*},0)$.\\
If $r_2 \leq r_1$ and $\left(\frac{r_2}{r_1}\right)^{\frac{1}{n}}\hat{h}_1 \leq \hat{g}_2$, and at least one of these inequalities is strict, then $u_2$ cannot invade $(u_1^{*},0)$.
\item
Suppose $D_1\neq D_2$. \\
Let $v_1(\xi,t)=u_1^{*}(\xi,t)\exp\left( \frac{E(\xi,t)}{D_1}\right)$ and
$v_2(\xi,t)=u_2^{*}(\xi,t)\exp\left( \frac{E(\xi,t)}{D_2}\right)$ where
\begin{equation}\label{eq_E}
E(\xi,t)=\frac{\dot{L}(t)L(t)\xi^2}{4 L_0^2} +\frac{\dot{A}(t)L(t)\xi}{2 L_0}.
\end{equation}
Let $\Delta E = \max_{[0,L_0]\times [0,T]}E - \min_{[0,L_0]\times [0,T]}E$. If both
\begin{align}
\left(1- \frac{D_2}{D_1}\right)\frac{\frac{\partial v_1}{\partial t}}{v_1}\leq r_2 & -\frac{D_2}{D_1} r_1 -\left(1- \frac{D_2}{D_1}\right)\frac{\dot{L}(t)}{2L(t)}  \nonumber\\ & +\left(\frac{1}{D_2} - \frac{D_2}{D_1^2}\right)\left(-\frac{\dot{A}(t)^2}{4}+\frac{\ddot{L}(t)L(t)\xi^2}{4 L_0^2} + \frac{\ddot{A}(t)L(t)\xi}{2 L_0}\right)
\end{align}
for all $0\leq \xi\leq L_0$, $0\leq t\leq T$ and
\begin{align}
\frac{\hat{g}_2}{\hat{h}_1}\leq  \left(\frac{D_2}{D_1}\right)^{\frac{1}{n}}\exp\left( -\left\vert \frac{1}{D_2}-\frac{1}{D_1}\right\vert
\Delta E \right),
\end{align}
then $u_2$ can invade $(u_1^{*},0)$.\\
If both
\begin{align}
\left(1- \frac{D_2}{D_1}\right)\frac{\frac{\partial v_1}{\partial t}}{v_1}\geq r_2 & -\frac{D_2}{D_1} r_1 -\left(1- \frac{D_2}{D_1}\right)\frac{\dot{L}(t)}{2L(t)} \nonumber\\ & +\left(\frac{1}{D_2} - \frac{D_2}{D_1^2}\right)\left( -\frac{\dot{A}(t)^2}{4} + \frac{\ddot{L}(t)L(t)\xi^2}{4 L_0^2} + \frac{\ddot{A}(t)L(t)\xi}{2 L_0}\right)
\end{align}
for all $0\leq \xi\leq L_0$, $0\leq t\leq T$ and
\begin{align}
\frac{\hat{g}_2}{\hat{h}_1} \geq \left(\frac{D_2}{D_1}\right)^{\frac{1}{n}}\exp\left(\ \left\vert \frac{1}{D_2}-\frac{1}{D_1}\right\vert
\Delta E\right),
\end{align}
then $u_2$ cannot invade $(u_1^{*},0)$.
\end{enumerate}
\end{corollary}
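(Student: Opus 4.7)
The plan is to verify the hypotheses of Proposition~\ref{proposition_hu2_gu1}. With the power-law forms \eqref{eq_gh}, what must be shown is $\hat{g}_2 u_1^{*}\lneq\hat{h}_2 u_2^{*}$ (for invasion) or the opposite (for non-invasion) on $\overline{\Omega_0}\times[0,T]$. Following the strategy of Case~3 of Corollary~\ref{corollary_1}, I will seek a constant $a>0$ that sandwiches two requirements: $au_1^{*}e^{E/D_1}\leq u_2^{*}e^{E/D_2}$ pointwise, together with $\hat{g}_2\leq a\hat{h}_2\,e^{E(1/D_1-1/D_2)}$ pointwise; multiplying these delivers the desired comparison.

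The first step is the change of variables $v_n=u_n^{*}\exp(E/D_n)$ with $E$ as in~\eqref{eq_E}. The definition of $E$ is chosen precisely to cancel the transport coefficient $b(\xi,t)=(\dot{A}(t)L_0+\xi\dot{L}(t))/L(t)$ arising in the rescaled operator of the Example, since $bL^2/(2L_0^2)=E_\xi$. A direct calculation then produces the drift-free equation
\begin{equation*}
\partial_t v_n = D_n\tfrac{L_0^2}{L^2}\partial_\xi^2 v_n + G_n v_n + v_n\bigl(r_n - \hat{h}_n^n v_n^n e^{-nE/D_n}\bigr), \quad G_n=\tfrac{E_t}{D_n}-\tfrac{L_0^2 E_\xi^2}{D_n L^2}-\tfrac{L_0^2 E_{\xi\xi}}{L^2}.
\end{equation*}

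The second step is to show that $av_1$ is a subsolution of the $v_2$-equation. Substituting the expression for $D_1 L_0^2 L^{-2}\partial_\xi^2 v_1$ taken from the $v_1$-equation, the subsolution inequality reduces to
\begin{equation*}
\Bigl(1-\tfrac{D_2}{D_1}\Bigr)\tfrac{\partial_t v_1}{v_1} \leq r_2 - \tfrac{D_2}{D_1}r_1 + \bigl(G_2-\tfrac{D_2}{D_1}G_1\bigr) + v_1^n\Bigl(\tfrac{D_2}{D_1}\hat{h}_1^n e^{-nE/D_1}-\hat{h}_2^n a^n e^{-nE/D_2}\Bigr).
\end{equation*}
I will enforce two sufficient conditions: (a)~the $v_1^n$-coefficient is non-negative, i.e.\ $a\leq (D_2/D_1)^{1/n}(\hat{h}_1/\hat{h}_2)\,e^{E(1/D_2-1/D_1)}$; and (b)~the remainder is non-negative. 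For (b), expanding $G_2-(D_2/D_1)G_1$ using $E_{\xi\xi}=\dot{L}L/(2L_0^2)$ and exploiting the cancellation of $\dot{L}^2$ in $E_t-L_0^2 E_\xi^2/L^2=\ddot{L}L\xi^2/(4L_0^2)+\ddot{A}L\xi/(2L_0)-\dot{A}^2/4$ gives exactly the first displayed inequality in the statement. The complementary requirement $\hat{g}_2\leq a\hat{h}_2\,e^{E(1/D_1-1/D_2)}$ provides a lower bound on $a$, and the existence of a $(\xi,t)$-uniform admissible $a$ amounts to $\hat{g}_2/\hat{h}_1\leq (D_2/D_1)^{1/n}\cdot\min e^{E(1/D_2-1/D_1)}/\max e^{E(1/D_2-1/D_1)}=(D_2/D_1)^{1/n}\exp(-|1/D_2-1/D_1|\Delta E)$, matching the stated bound. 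Part~1 ($D_1=D_2$) is the degenerate limit where the weights collapse to $1$ and the $\partial_t v_1/v_1$ term drops out; the argument then mirrors Case~2 of Corollary~\ref{corollary_1}, using $K_n^n=r_n/\hat{h}_n^n$ to absorb the reaction terms. The non-invasion halves follow by reversing every inequality and comparing in the opposite direction.

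The main obstacle I expect is the rigorous justification of the pointwise comparison $av_1\leq v_2$: both $v_n$ vanish on $\partial\Omega_0$, making the ratio $\partial_t v_1/v_1$ singular at the lateral boundary. The cleanest workaround is to stay with the unfactored form $\partial_t(av_1)-D_2 L_0^2 L^{-2}\partial_\xi^2(av_1)-\ldots\leq 0$, which is polynomial in $v_1$ and~$a$, and to invoke the periodic-parabolic maximum principle in the spirit of Hess~\cite{Hess} and Cantrell--Cosner~\cite{CantrellCosner} already cited in the proof of Proposition~\ref{proposition_hu2_gu1}. The strict inequality required to activate the strict hypothesis of Proposition~\ref{proposition_hu2_gu1} is secured by the nondegeneracy assumption that $A$ and $L$ are not both constant.
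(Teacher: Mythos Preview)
Your proposal is correct and follows essentially the same route as the paper: reduce to Proposition~\ref{proposition_hu2_gu1}, pass to the drift-free variables $v_n=u_n^{*}\exp(E/D_n)$, and seek a constant $a$ making $av_1$ a sub- or supersolution of the $v_2$-equation while simultaneously controlling the multiplicative weight $e^{E(1/D_1-1/D_2)}$. The only notable difference is that the paper dispatches the boundary singularity of $\partial_t v_1/v_1$ by invoking Hopf's Lemma and L'H\^opital's rule to show this ratio extends to a bounded function on $[0,L_0]\times[0,T]$, rather than reverting to the unfactored differential inequality as you suggest.
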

\begin{proof}
We shall show that the conditions from Proposition \ref{proposition_hu2_gu1} hold.
Since $g_2$ and $h_2$ are given by \eqref{eq_gh}, what we need to show for the invasibility is that
\begin{equation}\label{eq_aim1_gu*2_hu*1}
\hat{g}_2 u_1^{*}(\xi,t) \leq \hat{h}_2 u_2^{*}(\xi,t) \qquad \textrm{for all }
0\leq \xi\leq L_0, \ 0\leq t\leq T,
\end{equation}
and with strict inequality somewhere. This will hold if we can choose $a>0$ such that both
\begin{equation} \label{eq_*1*}
a v_1(\xi,t) \leq v_2(\xi,t)
\end{equation}
and also
\begin{equation}\label{eq_*2*}
\hat{g}_2 \leq a \hat{h}_2 \exp\left( E(\xi,t) \left(\frac{1}{D_1}-\frac{1}{D_2}\right) \right)
\end{equation}
on $[0,L_0]\times [0,T]$ and with strict inequality somewhere.
In order to choose suitable $a$, we note that $v_1$ and $v_2$ satisfy
\begin{align}\label{eq_v1}
\frac{\partial v_1}{\partial t}
=\frac{D_1 L_0^2}{L(t)^2}\frac{\partial^2 v_1}{\partial \xi^2} +\biggl(r_1 & -\frac{\dot{A}(t)^2}{4D_1} -\frac{\dot{L}(t)}{2L(t)} +\frac{\ddot{L}(t)L(t)\xi^2}{4D_1 L_0^2} + \frac{\ddot{A}(t)L(t)\xi}{2D_1 L_0} \biggr)v_1 \nonumber\\
-& \hat{h}_1^n e^{-\frac{n E(\xi,t)}{D_1}} v_1^{n+1}
\end{align}
and
\begin{align}
\frac{\partial v_2}{\partial t} = \frac{D_2 L_0^2}{L(t)^2}\frac{\partial^2 v_2}{\partial \xi^2} +\biggl(r_2 & -\frac{\dot{A}(t)^2}{4D_2} -\frac{\dot{L}(t)}{2L(t)} +\frac{\ddot{L}(t)L(t)\xi^2}{4D_2 L_0^2} + \frac{\ddot{A}(t)L(t)\xi}{2D_2 L_0}\biggr)v_2 \nonumber\\
- &  \hat{h}_2^n e^{-\frac{n E(\xi,t)}{D_2}} v_2^{n+1}.
\end{align}
Therefore, $av_1$ will be a subsolution for $v_2$ (and so \eqref{eq_*1*} holds) as long as
\begin{align}
\frac{\partial v_1}{\partial t} \leq \frac{D_2 L_0^2}{L(t)^2}\frac{\partial^2 v_1}{\partial \xi^2} +\biggl(r_2 & -\frac{\dot{A}(t)^2}{4D_2} -\frac{\dot{L}(t)}{2L(t)} +\frac{\ddot{L}(t)L(t)\xi^2}{4D_2 L_0^2} + \frac{\ddot{A}(t)L(t)\xi}{2D_2 L_0}\biggr)v_1 \nonumber\\
-&\hat{h}_2^n a^n e^{-\frac{n E(\xi,t)}{D_2}} v_1^{n+1}.
\end{align}
Using the equation \eqref{eq_v1} for $v_1$ to replace the term involving $\frac{\partial^2 v_1}{\partial \xi^2}$, this becomes the requirement that
\begin{align}
\left(1- \frac{D_2}{D_1}\right)\frac{\partial v_1}{\partial t} \leq& \left(r_2 -\frac{D_2}{D_1} r_1 -\left(1- \frac{D_2}{D_1}\right)\frac{\dot{L}(t)}{2L(t)} \right) v_1 \nonumber\\
& +\left(\frac{1}{D_2} - \frac{D_2}{D_1^2}\right)\left(-\frac{\dot{A}(t)^2}{4} +\frac{\ddot{L}(t)L(t)\xi^2}{4 L_0^2} + \frac{\ddot{A}(t)L(t)\xi}{2 L_0}\right) v_1 \nonumber\\
& -\left(\hat{h}_2^n a^n e^{-\frac{n E(\xi,t)}{D_2}}
- \frac{D_2}{D_1}\hat{h}_1^n e^{-\frac{n E(\xi,t)}{D_1}} \right) v_1^{n+1}. \label{eq_subsol*_inequality}
\end{align}
\begin{enumerate}
\item
In the special case $D_1=D_2=D$, the inequality \eqref{eq_subsol*_inequality} becomes that
\begin{align}
0 &\leq \left(r_2 - r_1 -\left(\hat{h}_2^n a^n - \hat{h}_1^n \right) u_1^{*n}\right)v_1.
\end{align}
This will be satisfied if $r_2\geq r_1$ and we chose $a$ according to equation \eqref{eq_a}. Indeed, in that case
\begin{align}
r_2 -r_1 -\left(\hat{h}_2^n a^n - \hat{h}_1^n \right)u_1^{*n}
&=\left(r_2 -  r_1\right) \left(1-\frac{\hat{h}_1^n}{r_1}u_1^{*n}\right) \geq 0,
\end{align}
since due to the assumptions \eqref{assumptions_on_F_1_2} and \eqref{eq_gh}, we have
$u_1^{*}\leq K_1 = \frac{r_1^{\frac{1}{n}}}{\hat{h}_1}$.
Now we also need \eqref{eq_*2*} to be satisfied. In the case $D_1=D_2$ and with $a$ given by equation \eqref{eq_a}, this becomes condition \eqref{eq_rhg}.

For the non-invasibility result (i.e. to ensure that $u_2$ decays to zero), we need to reverse the inequalities in \eqref{eq_*1*} and \eqref{eq_*2*}.
Therefore we can get this by exactly the same proof but with the opposite inequalities.
\item
Now suppose that $D_1\neq D_2$.
Note that (by using Hopf's Lemma and L'H{\^{o}}pital's rule) the ratio $\frac{\frac{\partial v_1}{\partial t}}{v_1}$ is bounded on $[0,L_0]\times [0,T]$. So, certainly the inequality \eqref{eq_subsol*_inequality} will hold if both
\begin{align}\label{eq_ineq_for_rD*}
\left(1- \frac{D_2}{D_1}\right)\frac{\frac{\partial v_1}{\partial t}}{v_1}\leq & r_2  -\frac{D_2}{D_1} r_1 -\left(1- \frac{D_2}{D_1}\right)\frac{\dot{L}(t)}{2L(t)} \nonumber\\
& +\left(\frac{1}{D_2} - \frac{D_2}{D_1^2}\right)\left(-\frac{\dot{A}(t)^2}{4}+\frac{\ddot{L}(t)L(t)\xi^2}{4 L_0^2} + \frac{\ddot{A}(t)L(t)\xi}{2 L_0}\right)
\end{align}
and
\begin{equation}\label{eq_a*1}
\hat{h}_2^n a^n e^{-\frac{n E(\xi,t)}{D_2}}
- \frac{D_2}{D_1}\hat{h}_1^n e^{-\frac{n E(\xi,t)}{D_1}} \leq 0
\end{equation}
on $[0,L_0]\times [0,T]$. The condition \eqref{eq_a*1} can be written as
\begin{equation}
 a\leq \left(\frac{D_2}{D_1}\right)^{\frac{1}{n}} \frac{\hat{h}_1}{\hat{h}_2} \exp \left( E(\xi,t) \left(\frac{1}{D_2}-\frac{1}{D_1}\right) \right).
\end{equation}
Now we also need \eqref{eq_*2*} to be satisfied, and therefore we need to choose $a$ to satisfy
\begin{equation}
\frac{\hat{g}_2}{\hat{h}_2} \exp\left( E(\xi,t) \left(\frac{1}{D_2}-\frac{1}{D_1}\right)\right) \leq a \leq\left(\frac{D_2}{D_1}\right)^{\frac{1}{n}} \frac{\hat{h}_1}{\hat{h}_2} \exp \left( E(\xi,t) \left(\frac{1}{D_2}-\frac{1}{D_1}\right) \right)
\end{equation}
on $[0,L_0]\times [0,T]$. We can choose $a$ as required provided that
\begin{equation}
\frac{\hat{g}_2}{\hat{h}_1}\left(\frac{D_1}{D_2}\right)^{\frac{1}{n}} \leq
\frac{\min_{[0,L_0]\times [0,T]}  \exp\left( E(\xi,t)\left(\frac{1}{D_2}-\frac{1}{D_1}\right)\right) }{ \max_{[0,L_0]\times [0,T]}  \exp\left(E(\xi,t)\left(\frac{1}{D_2}-\frac{1}{D_1}\right)\right)}.
\end{equation}
This condition can be written as
\begin{align} \label{eq_ineq_a_final*}
\frac{\hat{g}_2}{\hat{h}_1} \left(\frac{D_1}{D_2}\right)^{\frac{1}{n}}\leq \exp\left( -\left\vert \frac{1}{D_2}-\frac{1}{D_1}\right\vert \Delta E \right).
\end{align}
Overall, this means that if $D_1\neq D_2$, and \eqref{eq_ineq_a_final*} is satisfied and \eqref{eq_ineq_for_rD*} holds on $[0,L_0]\times [0,T]$, then $u_2$ can invade $(u_1^{*},0)$.

For the non-invasibility result (i.e. to ensure that $u_2$ decays to zero), we need to reverse the inequalities in \eqref{eq_*1*} and \eqref{eq_*2*}.
Therefore, as well as reversing the inequality from \eqref{eq_ineq_for_rD*},
we now require $a$ such that
\begin{equation}
\frac{\hat{g}_2}{\hat{h}_2} \exp\left( E(\xi,t)\left(\frac{1}{D_2}-\frac{1}{D_1}\right)\right) \geq a \geq\left(\frac{D_2}{D_1}\right)^{\frac{1}{n}} \frac{\hat{h}_1}{\hat{h}_2} \exp \left( E(\xi,t) \left(\frac{1}{D_2}-\frac{1}{D_1}\right) \right)
\end{equation}
on $[0,L_0]\times [0,T]$. We can choose $a$ as required provided that
\begin{equation}
\frac{\hat{g}_2}{\hat{h}_1}\left(\frac{D_1}{D_2}\right)^{\frac{1}{n}} \geq
\frac{\max_{[0,L_0]\times [0,T]} \exp\left( E(\xi,t)\left(\frac{1}{D_2}-\frac{1}{D_1}\right)\right) }{ \min_{[0,L_0]\times [0,T]} \exp\left(E(\xi,t)\left(\frac{1}{D_2}-\frac{1}{D_1}\right)\right)},
\end{equation}
which becomes the condition that
\begin{align}
\frac{\hat{g}_2}{\hat{h}_1} \left(\frac{D_1}{D_2}\right)^{\frac{1}{n}}\geq \exp\left(\ \left\vert \frac{1}{D_2}-\frac{1}{D_1}\right\vert
 \Delta E \right).
\end{align}
\end{enumerate}
\end{proof}
\section*{Acknowledgements}
This work was funded by EPSRC (reference EP/W522545/1). I would also like to thank Professor Elaine Crooks for her useful discussions.

\end{document}